\theoremstyle{plain}
\newtheorem{lem}{Lemma}
\newtheorem{prop}[lem]{Proposition}
\newtheorem{thm}[lem]{Theorem}
\theoremstyle{definition}
\newtheorem{defn}[lem]{Definition}
\newtheorem{ex}[lem]{Example}
\newtheorem{rmk}[lem]{Remark}
\newtheorem{notn}[lem]{Notation}
\newtheorem{fact}[lem]{Fact}
\newtheorem*{Convention}{Convention}
\newtheorem{Proof}[lem]{Proof}
\newcommand{\cat}[1]{\mathcal{#1}}
\newcommand{\catd}{\cat{D}}
\newcommand{\depth}{\operatorname{depth}}
\newcommand{\ann}{\operatorname{Ann}}
\newcommand{\HH}{\operatorname{H}}
\newcommand{\Hom}{\operatorname{Hom}}	
\newcommand{\spec}{\operatorname{Spec}}
\newcommand{\shift}{\mathsf{\Sigma}}
\newcommand{\cone}{\operatorname{Cone}}
\newcommand{\ideal}[1]{\mathfrak{#1}}
\newcommand{\m}{\ideal{m}}
\newcommand{\n}{\ideal{n}}
\newcommand{\p}{\ideal{p}}
\newcommand{\comp}[1]{\widehat{#1}}
\newcommand{\supp}{\operatorname{Supp}}
\newcommand{\Min}{\operatorname{Min}}
\newcommand{\bbz}{\mathbb{Z}}
\newcommand{\xra}{\xrightarrow}
\newcommand{\vf}{\varphi}
\newcommand{\x}{\mathbf{x}}
\renewcommand{\geq}{\geqslant}
\renewcommand{\leq}{\leqslant}
\newcommand{\Ext}[4][R]{\operatorname{Ext}_{#1}^{#2}(#3,#4)}	
\newcommand{\Rhom}[3][R]{\mathbf{R}\!\operatorname{Hom}_{#1}(#2,#3)}	
\newcommand{\Lotimes}[3][R]{#2\otimes^{\mathbf{L}}_{#1}#3}
\newcommand{\Otimes}[3][R]{#2\otimes_{#1}#3}
\renewcommand{\Hom}[3][R]{\operatorname{Hom}_{#1}(#2,#3)}	
\newcommand{\Tor}[4][R]{\operatorname{Tor}^{#1}_{#2}(#3,#4)}
\newcommand{\nak}{\operatorname{NAK}}
\numberwithin{equation}{lem}
\begin{document}

\bibliographystyle{amsplain}

\author{Sean Sather-Wagstaff}

\address{Department of Mathematics,
NDSU Dept \# 2750,
PO Box 6050,
Fargo, ND 58108-6050
USA}

\email{sean.sather-wagstaff@ndsu.edu}

\urladdr{http://www.ndsu.edu/pubweb/\~{}ssatherw/}

\thanks{The author was supported in part by a grant from the NSA}

\title{Ascent properties for derived functors}



\keywords{ascent, Ext, flat homomorphism, NAK, Tor}
\subjclass[2010]{Primary: 13B40, 13D07; Secondary: 13D02}

\begin{abstract}
Given a flat local ring homomorphism $R\to S$, 
and two finitely generated $R$-modules $M$ and $N$, we describe conditions under which the modules
$\Tor iMN$ and $\Ext iMN$ have $S$-module structures that are compatible with their $R$-module structures.
\end{abstract}

\maketitle



\begin{Convention}
Throughout this paper, the term ``ring'' is short for ``commutative noetherian ring with identity'',
and ``module'' means ``unital module''.
\end{Convention}

We are interested in ascent of module structures along certain ring homomorphisms, following~\cite{anderson:neams,frankild:dcev,frankild:amsveem}
where the following result is proved; see~\cite[Theorems 1.5 and 1.7]{anderson:neams}.
Note that the natural maps from $R$ to its completion $\comp R$ and to its henselization $R^{\text{h}}$ satisfy the
hypotheses of this result.

\begin{fact}
\label{fact130216a}
Let $\vf\colon (R,\m,k)\to (S,\n,l)$ be a flat local ring homomorphism such that the induced map $R/\m\to S/\m S$ is an isomorphism,
and let $N$ be a finitely generated $R$-module.
Then the following conditions are equivalent:
\begin{enumerate}[\rm(i)]
\item \label{fact130216a1}
$N$ has an $S$-module structure compatible with its $R$-module structure via $\vf$.
\item \label{fact130216a2}
The natural map $\Hom SN\to N$ given by $f\mapsto f(1)$ is an isomorphism.
\item \label{fact130216a3}
The natural map $N\to \Otimes SN$ given by $n\mapsto \Otimes[]1n$ is an isomorphism.
\item \label{fact130216a4}
$\Ext iSN=0$ for all $i\geq 1$.
\item \label{fact130216a5}
$\Ext iSN$ is finitely generated over $S$ (or over $R$) for  $i= 1,\ldots,\dim_R(N)$.
\item \label{fact130216a6}
$\Otimes SN$ is finitely generated over $R$.
\item \label{fact130216a7}
The induced map $R/\ann_R(N)\to S/\ann_R(N)S$ is an isomorphism.
\item \label{fact130216a8}
For all $\p\in\Min_R(N)$ (equivalently, for all $\p\in\supp_R(N)$), the induced map $R/\p\to S/\p S$ is an isomorphism.
\end{enumerate}
\end{fact}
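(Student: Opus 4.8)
The plan is to establish a web of implications organized around three groups: the structural conditions \eqref{fact130216a1}, \eqref{fact130216a2}, \eqref{fact130216a3}, \eqref{fact130216a6}; the support conditions \eqref{fact130216a7}, \eqref{fact130216a8}; and the homological conditions \eqref{fact130216a4}, \eqref{fact130216a5}. Throughout I would use two standing consequences of the hypotheses. First, a flat local homomorphism is faithfully flat, so every canonical map $\iota_M\colon M\to\Otimes SM$, $m\mapsto\Otimes[]1m$, is injective. Second, the triviality of the closed fibre gives $\m S=\n$ and $\Otimes S{k}\cong k$, so that reducing any of these modules modulo $\m$ collapses $S$ to $k$: concretely, $\iota_M\otimes_R k$ is an isomorphism $M/\m M\to M/\m M$ for every finitely generated $M$, and likewise the multiplication map $\mu_M\colon\Otimes SM\to M$ becomes an isomorphism after $-\otimes_S l$ whenever $M$ carries an $S$-structure. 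This is the Nakayama engine that drives the structural part.

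For the structural equivalences I would argue as follows. The implication \eqref{fact130216a3}$\Rightarrow$\eqref{fact130216a1} is automatic, since $\Otimes SN$ is visibly an $S$-module, and \eqref{fact130216a3}$\Rightarrow$\eqref{fact130216a6} is automatic since then $\Otimes SN\cong N$ is finitely generated over $R$. For \eqref{fact130216a1}$\Rightarrow$\eqref{fact130216a3} I would use that, when $N$ has an $S$-structure, $\mu_N$ is $S$-linear, surjective, and satisfies $\mu_N\iota_N=\id_N$; since $\mu_N\otimes_S l$ is an isomorphism and $\ker\mu_N$ is finitely generated over $S$, Nakayama over $S$ forces $\mu_N$, hence $\iota_N$, to be an isomorphism. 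The case \eqref{fact130216a1}$\Rightarrow$\eqref{fact130216a2} is proved dually, replacing $\Otimes SN$ by $\Hom SN$ and $\mu_N$ by evaluation at $1$. For \eqref{fact130216a6}$\Rightarrow$\eqref{fact130216a3} I would instead run Nakayama over $R$: the cokernel of the injection $\iota_N$ is finitely generated over $R$ by \eqref{fact130216a6} and dies modulo $\m$, hence vanishes.

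Turning to support, the key observation is that condition \eqref{fact130216a8} depends only on $\supp_R N$, so it suffices to prove the equivalence \eqref{fact130216a3}$\Leftrightarrow$\eqref{fact130216a8} for arbitrary finitely generated modules; condition \eqref{fact130216a7} is then exactly \eqref{fact130216a3} applied to $R/\ann_R N$, whose support equals $\supp_R N$. For \eqref{fact130216a8}$\Rightarrow$\eqref{fact130216a3} I would take a prime filtration of $N$ with factors $R/\p_j$, note that $\Otimes{S}{-}$ is exact, and run the snake lemma up the filtration using that each $\iota_{R/\p_j}$ is an isomorphism. For the converse I would use \eqref{fact130216a6}: since $R$ is noetherian, each $S/\p_j S$ is a subquotient of the finitely generated module $\Otimes SN$, hence finitely generated over $R$, so \eqref{fact130216a6} holds for each $R/\p_j$; then \eqref{fact130216a6}$\Rightarrow$\eqref{fact130216a3} gives $R/\p_j\cong S/\p_j S$, and for a minimal $\p_j$ this propagates to every larger prime $\p$ via $S/\p S\cong(S/\p_j S)\otimes_{R/\p_j}(R/\p)$. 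Finally I would deduce \eqref{fact130216a4} from \eqref{fact130216a8}: flatness of $S$ collapses the change-of-rings spectral sequence for $R\to R/\p$ to an isomorphism $\Ext iS{R/\p}\cong\Ext[R/\p]{i}{S/\p S}{R/\p}$, which vanishes for $i\geq1$ once $S/\p S\cong R/\p$, and the prime filtration propagates the vanishing to $\Ext iSN=0$. The implication \eqref{fact130216a4}$\Rightarrow$\eqref{fact130216a5} is trivial.

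The main obstacle is closing the loop at \eqref{fact130216a5}: recovering the support condition from mere finite generation of $\Ext iSN$ in the range $1\leq i\leq\dim_R(N)$. I would argue by contraposition and induction on $\dim_R N$, choosing a top-dimensional associated prime $\p$ (so $\dim R/\p=\dim_R N$) and using the long exact Ext-sequence of $0\to R/\p\to N\to N'\to 0$ together with the inductive hypothesis to isolate the finiteness of $\Ext iS{R/\p}$ in the relevant range; the role of the bound $\dim_R(N)$ is precisely to leave room for this top-dimensional factor. By the spectral-sequence collapse this reduces to the rigidity statement that a flat local homomorphism $\ol R\to\ol S$ of a domain $\ol R$ with trivial closed fibre which is \emph{not} an isomorphism must have some $\Ext[\ol R]{i}{\ol S}{\ol R}$ with $1\leq i\leq\dim\ol R$ that fails to be finitely generated. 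This rigidity is the crux of the statement and is where a genuine Nakayama-type input ($\nak$) for the typically large modules $\Ext[\ol R]{i}{\ol S}{\ol R}$ is required; everything else is bookkeeping around flatness, exactness, and ordinary Nakayama.
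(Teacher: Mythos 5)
A preliminary remark on the comparison you asked for: the paper contains no proof of this statement at all. It is quoted as a Fact from \cite[Theorems 1.5 and 1.7]{anderson:neams}, which in turn rests on \cite{frankild:dcev,frankild:amsveem}, so your attempt can only be measured against those papers. Against that benchmark, your outer layer (the structural equivalences among (i), (iii), (vi), the support equivalences (vii), (viii) via prime filtrations, and (viii)$\Rightarrow$(iv)$\Rightarrow$(v) via the change-of-rings isomorphism $\Ext iS{R/\p}\cong\Ext[R/\p]{i}{S/\p S}{R/\p}$) is sound and is in fact the same bookkeeping the literature uses. The genuine gap is (v)$\Rightarrow$(viii): you reduce it to the ``rigidity statement'' that a flat local map $\ol{R}\to\ol{S}$ of a domain with trivial closed fibre and $\Ext[\ol{R}]{i}{\ol{S}}{\ol{R}}$ finitely generated for $1\leq i\leq\dim\ol{R}$ must be an isomorphism, and then declare this the crux without proving it. But that rigidity statement \emph{is} the theorem: it is essentially the main result of \cite{frankild:dcev} and its generalizations in \cite{frankild:amsveem,anderson:neams}, and its known proof is genuinely hard (it uses, e.g., finiteness of $\pdim_RS$ for the flat module $S$ and a delicate NAK induction). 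A proposal whose final step is ``here a genuine input is required'' has not proved the Fact. Your surrounding reduction is also looser than you suggest: in the induction on $\dim_RN$ using $0\to R/\p\to N\to N'\to 0$, the quotient $N'$ may have the \emph{same} dimension as $N$, so induction on dimension alone does not close, and the long exact sequence only controls $\Ext iS{R/\p}$ in a shifted range; this needs the more careful induction of the cited papers.

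There is also one concretely false step: (i)$\Rightarrow$(ii) ``proved dually.'' There is no dual Nakayama here. First, $\Hom SN$ is not known to be finitely generated over $S$ --- that it is (indeed that it is $\cong N$) is essentially the content of (ii), which asserts that every $R$-linear map $S\to N$ is automatically $S$-linear --- so Nakayama cannot be applied to the kernel of evaluation. Second, while $-\otimes_Rk$ computes correctly against $\Otimes SN$ because tensor commutes with base change, it does not commute with $\Hom SN$, since $S$ is not module-finite over $R$; so the mod-$\m$ reduction cannot even be computed. As written, (ii) is then a sink in your implication graph, reached only through this invalid edge, so your web never shows that (ii) implies the other conditions. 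Both defects are repairable inside your own framework: the converse (ii)$\Rightarrow$(i) is trivial (transport of structure along the evaluation isomorphism), and (viii)$\Rightarrow$(ii) follows from the same change-of-rings-plus-filtration argument you use for (iv), since the long exact sequences deliver $\Hom SN\cong N$ together with the Ext-vanishing. Relatedly, in (i)$\Rightarrow$(iii) the inference ``surjective, finitely generated kernel, isomorphism mod $\n$, hence isomorphism'' is false as stated ($S\onto S/\fa$ is a counterexample); what saves you is the $R$-linear splitting $\mu_N\iota_N=\mathrm{id}_N$, which you list but never invoke: it keeps the sequence exact after $-\otimes_Rk$, so the kernel dies mod $\n$ and Nakayama over $S$ applies. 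That one-line repair works on the tensor side precisely because both reduction and finiteness are available there, and it has no analogue on the Hom side --- which is why the ``dual'' argument cannot be patched the same way.
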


Condition~\eqref{fact130216a8} in  this fact shows that ascent (i.e., condition~\eqref{fact130216a1}) is somehow a topological condition
on the closed set $\supp_R(N)\subseteq\spec(R)$. 
The point of this note is to exploit this idea to identify conditions on $M$ and $N$
that guaranteed ascent of module structures for $\Ext iMN$ and $\Tor iMN$. 
Of course, one way to guarantee that $\Ext iMN$ and $\Tor iMN$ have compatible $S$-module structures is for
$M$ or $N$ to have a compatible $S$-module. For instance, if $M$ has such an $S$-module structure,
then so does $\Otimes MN$ by the formula $s(m\otimes n):=(sm)\otimes n$.
However, straightforward examples show that this condition is sufficient but not necessary.

\begin{ex}\label{ex130217a}
Let $k$ be a field, and consider the localized polynomial ring $R=k[X,Y]_{(X,Y)}$.
The modules $R/XR$ and $R/YR$ do not have $\comp R$-module structures compatible with their $R$-module structures,
by Fact~\ref{fact130216a}. However, the modules $\Ext i{R/XR}{R/YR}$ and $\Tor i{R/XR}{R/YR}$ are finite-dimensional
vector spaces over $R/(X,Y)R\cong \comp R/(X,Y)\comp R$, so they do have compatible $\comp R$-module structures.
\end{ex}

Our main result is the following.

\begin{thm}\label{thm130216a}
Let $\vf\colon (R,\m,k)\to (S,\n,l)$ be a flat local ring homomorphism such that the induced map $R/\m\to S/\m S$ is an isomorphism,
and let $M$ and $N$ be finitely generated $R$-modules.
Then the following conditions  are equivalent:
\begin{enumerate}[\rm(i)]
\item \label{thm130216a1}
$\Otimes MN$ has an $S$-module structure compatible with its $R$-module structure via $\vf$.
\item \label{thm130216a2}
$\Tor iMN$ has an $S$-module structure compatible with its $R$-module structure via $\vf$ for all $i\geq 0$.
\item \label{thm130216a3}
$\Ext iMN$ has an $S$-module structure compatible with its $R$-module structure via $\vf$  for all $i\geq 0$.
\item \label{thm130216a4}
$\Ext iMN$ has an $S$-module structure compatible with its $R$-module structure via $\vf$ for $i=0,\ldots,\dim_R(N)-1$.
\item \label{thm130216a5}
$\Ext i{\Otimes SM}N$ is finitely generated over $R$ for all $i\geq 1$.
\item \label{thm130216a6}
The natural map $\Ext i{\Otimes SM}N\to\Ext iMN$ is bijective for all $i\geq 0$.
\item \label{thm130216a7}
For all $\p\in\supp_R(M)\cap \supp_R(N)$ (equivalently, for all $\p$ that are minimal elements of $\supp_R(M)\cap \supp_R(N)$), the induced map $R/\p\to S/\p S$ is an isomorphism.
\end{enumerate}
\end{thm}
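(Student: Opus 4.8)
The plan is to treat the topological condition~\eqref{thm130216a7} as a hub and show that each of the other six conditions is equivalent to it. The engine is Fact~\ref{fact130216a}, applied to the individual modules $\Tor iMN$ and $\Ext iMN$, combined with two elementary support computations: $\supp_R(\Otimes MN)=\supp_R(M)\cap\supp_R(N)$, while $\supp_R(\Tor iMN)$ and $\supp_R(\Ext iMN)$ are contained in $\supp_R(M)\cap\supp_R(N)$ for every $i$. Since $R/\m\to S/\m S$ is assumed to be an isomorphism and the isomorphism property of $R/\p\to S/\p S$ is stable under specialization (it amounts to an isomorphism of corresponding quotients), the closed point is automatically ``good'' and it suffices to test~\eqref{thm130216a7} on the minimal primes of $\supp_R(M)\cap\supp_R(N)$; in particular any ``bad'' prime $\p$ satisfies $\p\neq\m$, so $\dim_R(N)\geq\dim(R/\p)\geq 1$.

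With these observations the first block of equivalences is formal. The implications $\eqref{thm130216a7}\Rightarrow\eqref{thm130216a2}$ and $\eqref{thm130216a7}\Rightarrow\eqref{thm130216a3}$ hold because every $\Tor iMN$ and $\Ext iMN$ has support inside the good locus $\supp_R(M)\cap\supp_R(N)$, so Fact~\ref{fact130216a} endows each with a compatible $S$-structure. The implications $\eqref{thm130216a2}\Rightarrow\eqref{thm130216a1}$ and $\eqref{thm130216a3}\Rightarrow\eqref{thm130216a4}$ are trivial restrictions of the index range. Finally $\eqref{thm130216a1}\Rightarrow\eqref{thm130216a7}$ follows because $\supp_R(\Otimes MN)$ is \emph{all} of $\supp_R(M)\cap\supp_R(N)$: a compatible structure on $\Otimes MN$ forces, via Fact~\ref{fact130216a}, the isomorphism $R/\p\to S/\p S$ for every $\p$ in that set.

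The crux is $\eqref{thm130216a4}\Rightarrow\eqref{thm130216a7}$, which I would prove by contraposition and which is where the truncated range $i\leq\dim_R(N)-1$ must be justified. Suppose~\eqref{thm130216a7} fails and choose $\p$ minimal in $\supp_R(M)\cap\supp_R(N)$ with $R/\p\to S/\p S$ not an isomorphism; then $\p\neq\m$. Localizing at $\p$, minimality yields $\supp_{R_\p}(M_\p)\cap\supp_{R_\p}(N_\p)=\{\p R_\p\}$, so the grade of $\ann_{R_\p}(M_\p)$ on $N_\p$ equals $\depth_{R_\p}(N_\p)$, and hence the least index $i$ with $\Ext[R_\p]i{M_\p}{N_\p}\neq 0$ is exactly $\depth_{R_\p}(N_\p)$; here $\Ext[R_\p]i{M_\p}{N_\p}$ is the localization of $\Ext iMN$ at $\p$, as $M$ is finitely generated. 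Because $\p$ is not maximal, the height of $\p$ in $R/\ann_R(N)$ is at most $\dim_R(N)-1$, so $\depth_{R_\p}(N_\p)\leq\dim_{R_\p}(N_\p)\leq\dim_R(N)-1$. Thus some $\Ext iMN$ with $i\leq\dim_R(N)-1$ has $\p$ in its support; since $R/\p\to S/\p S$ is not an isomorphism, Fact~\ref{fact130216a} shows this module admits no compatible $S$-structure, contradicting~\eqref{thm130216a4}. The main obstacle is precisely this bookkeeping: the nonvanishing $\Ext iMN$ must be produced \emph{inside} the truncated range, and the bound is delivered by the height drop for a non-maximal prime together with the grade-equals-depth identity.

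For conditions~\eqref{thm130216a5} and~\eqref{thm130216a6} I would use the short exact sequence $0\to M\to\Otimes SM\to Q\to 0$, where injectivity of $m\mapsto\Otimes[]1m$ uses that $\vf$ is faithfully flat and $Q=\coker$. Applying $\Hom{-}N$ gives a long exact sequence in which the comparison map is identified with $\Ext i{\Otimes SM}N\to\Ext iMN$, so~\eqref{thm130216a6} holds for all $i$ if and only if $\Ext iQN=0$ for all $i\geq 0$; moreover $\eqref{thm130216a6}\Rightarrow\eqref{thm130216a5}$ is immediate, since bijectivity identifies $\Ext i{\Otimes SM}N$ with the finitely generated module $\Ext iMN$. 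To close the loop with~\eqref{thm130216a7} I would compare $\supp_R(Q)$ with the bad locus $\{\p\in\supp_R(N):R/\p\to S/\p S\text{ is not an isomorphism}\}$, routing the information through the change-of-rings spectral sequence $\Ext p{M}{\Ext qSN}\Rightarrow\Ext{p+q}{\Otimes SM}N$, which is available because $\Hom SI$ is an injective $R$-module whenever $I$ is injective and $\vf$ is flat. I expect the technical nuisance here to be that $S$ and $Q$ are not finitely generated over $R$, so the supports of the correction terms $\Ext qSN$ must be confined to the bad locus of $N$ by hand rather than by naive localization; once this is done, $\Ext p{M}{\Ext qSN}$ is supported in $\supp_R(M)$ intersected with the bad locus of $N$, which~\eqref{thm130216a7} forces to be empty, yielding the collapse of the spectral sequence and hence $\eqref{thm130216a7}\Rightarrow\eqref{thm130216a6}$ and $\eqref{thm130216a5}\Rightarrow\eqref{thm130216a7}$.
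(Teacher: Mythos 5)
Your treatment of conditions \eqref{thm130216a1}--\eqref{thm130216a4} and \eqref{thm130216a7} is correct and close to the paper's argument: the paper also uses \eqref{thm130216a7} as the hub, with the same support containments and Fact~\ref{fact130216a}. Your proof of \eqref{thm130216a4}$\implies$\eqref{thm130216a7} is a mild variant of the paper's: where you invoke Rees' theorem, namely $\min\{i:\Ext[R_\p]i{M_\p}{N_\p}\neq0\}=\grade(\ann_{R_\p}(M_\p),N_\p)=\depth_{R_\p}(N_\p)$, with the last equality from minimality of $\p$, the paper proves the needed nonvanishing in some degree $i_0\leq\depth_{R_\p}(N_\p)$ as Lemma~\ref{lem130216a} via Koszul depth-sensitivity, and then performs the same dimension count (for $\p\neq\m$ one has $\dim_{R_\p}(N_\p)\leq\dim_R(N)-1$) to reach a contradiction. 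Both arguments are sound, and your bookkeeping for why the nonvanishing lands inside the truncated range is exactly right.

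The treatment of \eqref{thm130216a5} and \eqref{thm130216a6}, however, has a genuine gap. You propose to confine $\supp_R(\Ext qSN)$ to the bad locus of $N$, but this is impossible: over the local ring $R$, every nonzero module $L$ satisfies $L_\m=L$, so $\m\in\supp_R(L)$; since $\m$ is a good prime by the standing hypothesis, ``support contained in the bad locus'' is literally equivalent to $\Ext qSN=0$ for $q\geq1$, which by Fact~\ref{fact130216a} says that $N$ itself ascends --- strictly stronger than \eqref{thm130216a7}, and false in Example~\ref{ex130217a} (there $M=R/XR$, $N=R/YR$, $S=\comp R$: all seven conditions of the theorem hold, yet $\Ext qSN\neq 0$ for some $q\geq 1$ because $N$ does not ascend). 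The same objection defeats the next step, that $\Ext pM{\Ext qSN}$ has ``empty support'': no support argument can detect vanishing of these non-finitely-generated modules, since localizing at $\m$ does nothing. The underlying wrong turn is the choice of adjunction: $\Rhom M{\Rhom SN}$ puts the uncontrolled modules $\Ext qSN$ in the \emph{second} slot, where Fact~\ref{fact130216a} (which requires a finitely generated second argument) gives you no leverage. The paper instead uses $\Rhom{\Lotimes SM}N\simeq\Rhom S{\Rhom MN}$, whose $E_2$-page $\Ext pS{\Ext qMN}$ carries the finitely generated modules $\Ext qMN$ in the second slot; Fact~\ref{fact130216a} then collapses it, giving \eqref{thm130216a3}$\implies$\eqref{thm130216a5}. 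Moreover, your cycle never actually argues \eqref{thm130216a5}$\implies$\eqref{thm130216a7}; it is asserted at the end without a mechanism, and this is the hardest part of the theorem. The paper proves \eqref{thm130216a5}$\implies$\eqref{thm130216a6} by a Koszul--Nakayama argument (Lemma~\ref{lem130217a}): under \eqref{thm130216a5} both $\Hom{\Otimes SP}J$ and $\Hom PJ$ have degreewise finitely generated homology, the comparison map becomes a quasiisomorphism after applying $\Otimes K{(-)}$ because $K\to\Otimes KS$ is a quasiisomorphism and $K$ is self-dual, and NAK for complexes with finitely generated homology finishes; some such finiteness-to-vanishing argument is unavoidable and is absent from your sketch. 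Finally, note that \eqref{thm130216a6} concerns the \emph{natural} map: even a successful collapse of your spectral sequence would yield only an abstract isomorphism until you identify the edge map with it, an extra step the paper's direct chain-level argument avoids.
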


\begin{rmk}\label{rmk130217a}
The special case $M=R$ in Theorem~\ref{thm130216a} recovers much of Fact~\ref{fact130216a}. Of course, we use Fact~\ref{fact130216a}
in the proof of Theorem~\ref{thm130216a}, so we are not claiming that the fact is a corollary of the theorem.

One can combine Fact~\ref{fact130216a}
with Theorem~\ref{thm130216a} in several ways to give other  conditions equivalent to the ones from Theorem~\ref{thm130216a},
like the following:
\begin{enumerate}[\rm(i)]
\item[(ii')]
The natural map $\Hom S{\Otimes MN}\to \Otimes MN$ given by $f\mapsto f(1)$ is an isomorphism.
\item[(vi')] 
$\Otimes S{\Otimes MN}$ is finitely generated over $R$.
\end{enumerate}
We leave other such variations to the interested reader. See, though, Proposition~\ref{prop130217a}.

Regarding the range $i\geq 1$ in condition~\eqref{thm130216a5}, note that the module
$$\Ext 0{\Otimes SM}N\cong\Hom{\Otimes SM}N\cong\Hom S{\Hom MN}$$
is automatically finitely generated by~\cite[Corollary 1.7]{frankild:amsveem}.
\end{rmk}

The proof of Theorem~\ref{thm130216a}  is given in Proof~\ref{proof130217a} below, after a few preliminaries.

\begin{notn}\label{notn130217a}
Because it is convenient for us, we use some standard notions from the derived category $\catd(R)$ of
the category of $R$-modules~\cite{hartshorne:rad, verdier:cd, verdier:1}, with some notation that is summarized  in~\cite{christensen:gd}. 
In particular, $R$-complexes are indexed homologically
$$X=\cdots\to X_i\to X_{i-1}\to\cdots$$
and the \emph{supremum} of an $R$-complex $X$ is
$$\sup(X):=\sup\{i\in\bbz\mid\HH_i(X)\neq 0\}.$$
Given two $R$-complexes $X$ and $Y$, the derived Hom-complex and derived tensor product of $X$ and $Y$ are denoted
$\Rhom XY$ and $\Lotimes XY$, respectively.

We use the term ``morphism of complexes'' (also known as ``chain map'') for a morphism in the category of $R$-complexes.
A \emph{quasiisomorphism} is a morphism of complexes such that each induced map on homology modules is an isomorphism.
The complex $\shift^nX$ is obtained by shifting $X$ by $n$ steps to the left.
\end{notn}

The next two lemmas are almost certainly standard, but we include proofs since they are relatively straightforward.

\begin{lem}\label{lem130216a}
Let $(R,\m,k)$ be a local ring, and let $M$, $N$ be non-zero finitely generated $R$-modules.
Then there is an integer $i\leq\depth_R(N)$ such that $\Ext iMN\neq 0$. 
\end{lem}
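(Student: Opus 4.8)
The plan is to induct on $d=\depth_R(N)$, which is a nonnegative integer since $N\neq 0$. Two standard facts drive the argument: first, $d=0$ holds exactly when $\m\in\ass_R(N)$; and second, since $M\neq 0$ is finitely generated over the local ring $R$, Nakayama's lemma furnishes a surjection $M\onto k$.

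For the base case $d=0$, the membership $\m\in\ass_R(N)$ provides an injection $k\into N$. Composing it with the surjection $M\onto k$ gives a nonzero homomorphism $M\to N$, so that $\Hom MN=\Ext 0MN\neq 0$, and the index $i=0=d$ does the job.

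For the inductive step, suppose $d\geq 1$. Then $\m$ contains an $N$-regular element $x$, and $N/xN$ is nonzero (by Nakayama) with $\depth_R(N/xN)=d-1$. By the inductive hypothesis there is an index $j\leq d-1$ with $\Ext jM{N/xN}\neq 0$. Applying $\Hom M{-}$ to the short exact sequence $0\to N\xra{x}N\to N/xN\to 0$ yields the long exact sequence
\[
\cdots\to\Ext jMN\xra{x}\Ext jMN\xra{\pi}\Ext jM{N/xN}\xra{\delta}\Ext{j+1}MN\to\cdots
\]
from which the conclusion is read off by a short case analysis.

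Namely, if the map $\pi$ is nonzero then $\Ext jMN\neq 0$ with $j\leq d-1\leq d$; otherwise $\pi=0$, which forces $\delta$ to be injective, so $\Ext jM{N/xN}$ embeds into $\Ext{j+1}MN$ and hence $\Ext{j+1}MN\neq 0$ with $j+1\leq d$. Either way an index $i\leq d$ with $\Ext iMN\neq 0$ appears, completing the induction. The one place to take care — which I would flag as the crux — is precisely this bookkeeping on the cohomological degree through the long exact sequence: the two cases are needed to guarantee that the surviving nonvanishing $\operatorname{Ext}$ sits in degree at most $d$ rather than slipping up to degree $d+1$. The remaining ingredients (the depth drop under a regular element, the existence of such an $x$, and the surjection onto $k$) are routine local algebra.
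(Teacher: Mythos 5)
Your proof is correct, and it takes a genuinely different route from the paper. You argue by induction on $d=\depth_R(N)$: the base case produces a nonzero composite $M\onto k\into N$ (using $\m\in\ass_R(N)$ when $d=0$ and Nakayama to get $M\onto k$), and the inductive step passes to $N/xN$ for an $N$-regular $x\in\m$ and reads the conclusion off the long exact sequence, with the two-case analysis on $\pi$ correctly keeping the surviving index at most $d$ (one could streamline this by noting that exactness of $\Ext jMN\to\Ext jM{N/xN}\to\Ext{j+1}MN$ alone forces one of the outer terms to be nonzero, but your version is fine). The paper instead gives a one-shot derived-category computation: with $K$ the Koszul complex on a system of parameters, depth-sensitivity gives $\sup(\Lotimes KN)=\dim(R)-\depth_R(N)$, the finite-length homology of $\Lotimes KN$ gives $\sup(\Rhom M{\Lotimes KN})=\sup(\Lotimes KN)$ (this plays the role of your base case: a nonzero finitely generated module admits a nonzero map to a nonzero finite-length module), and tensor-evaluation $\Rhom M{\Lotimes KN}\simeq\Lotimes K{\Rhom MN}$ converts this into the inequality $\sup(\Rhom MN)\geq-\depth_R(N)$, which is the lemma. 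Your approach buys elementarity and self-containedness --- no Koszul depth-sensitivity, no tensor-evaluation, no derived category --- at the cost of a genuine induction with the degree bookkeeping you rightly flag as the crux; the paper's approach buys brevity given machinery it has already set up in its Notation (and which it reuses in the proof of its main theorem), and it packages the entire induction into the single quantitative statement about $\sup(\Rhom MN)$.
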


\begin{proof}
Let $K$ denote the Koszul complex on a system of parameters for $R$.
We use the depth-sensitivity of $K$, which states that $\dim(R)-\sup(\Lotimes KN)=\depth_R(N)$. 
Since the homologies of $\Lotimes KN$ have finite length and $M$ is a non-zero module, it follows that
\begin{align*}
\sup(\Rhom M{\Lotimes KN})
&=\sup(\Lotimes KN)
=\dim(R)-\depth_R(N).
\end{align*}
The tensor-evaluation isomorphism
$\Rhom M{\Lotimes KN}\simeq\Lotimes K{\Rhom MN}$ implies that
$$
\dim(R)-\depth_R(N)
=\sup(\Lotimes K{\Rhom MN})
\leq\dim(R)+\sup(\Rhom MN)
$$
from which we conclude that $\sup(\Rhom MN)\geq-\depth_R(N)$.
The desired conclusion now follows. 
\end{proof}

\begin{lem}\label{lem130217a}
Let $R$ be a ring, and let $f\colon X\to Y$ be morphism of $R$-complexes. Assume that $\HH_i(X)$ and $\HH_i(Y)$ are finitely generated
over $R$ for all $i$.
Let $\x=x_1,\ldots,x_n$ be a sequence in the Jacobson radical of $R$,
and let $K$ be the Koszul complex $K^R(\x)$.
Then $f$ is a quasiisomorphism if and only if $\Otimes Kf\colon \Otimes KX\to \Otimes KY$ is a quasiisomorphism.
\end{lem}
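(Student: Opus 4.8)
The plan is to reduce the statement to a mapping-cone argument. First I would form the mapping cone $C = \cone(f)$, so that there is a standard short exact sequence of complexes $0 \to Y \to C \to \shift X \to 0$. The key elementary fact is that $f$ is a quasiisomorphism if and only if $C$ is exact, i.e.\ $\HH_i(C) = 0$ for all $i$; this follows from the long exact sequence in homology induced by that short exact sequence. Since the Koszul complex $K$ is a bounded complex of free $R$-modules, tensoring is exact, so $\Otimes KC \cong \cone(\Otimes Kf)$, and the same criterion gives that $\Otimes Kf$ is a quasiisomorphism if and only if $\Otimes KC$ is exact. Thus the lemma is equivalent to the assertion: for a complex $C$ with finitely generated homology modules, $C$ is exact if and only if $\Otimes KC$ is exact.

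For this reduction to be clean I would note that $\HH_i(C)$ is finitely generated for all $i$, since it sits in the long exact sequence between the finitely generated modules $\HH_i(X)$ and $\HH_i(Y)$ and $R$ is noetherian. So the whole problem comes down to proving the following single statement:

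\begin{quote}
If $C$ is an $R$-complex with $\HH_i(C)$ finitely generated for all $i$, and $\x$ lies in the Jacobson radical, then $C$ is exact if and only if $\Otimes KC$ is exact.
\end{quote}

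The forward direction is immediate: if $C$ is exact then $\Otimes KC \simeq 0$ because $K$ is a complex of flat (indeed free) modules. The substantive direction is the converse, and this is where I expect the main obstacle to lie. The natural approach is induction on $n$, the length of the sequence $\x$. For a single element $x$ in the radical, the Koszul complex $K^R(x)$ is the two-term complex $0 \to R \xra{x} R \to 0$, and the short exact sequence $0 \to C \to \Otimes{K^R(x)}C \to \shift C \to 0$ yields a long exact sequence whose connecting maps are, up to sign, multiplication by $x$ on $\HH_i(C)$. If $\Otimes{K^R(x)}C$ is exact, this long exact sequence forces multiplication by $x$ to be an isomorphism on each $\HH_i(C)$; but $\HH_i(C)$ is finitely generated and $x$ lies in the Jacobson radical, so by Nakayama's lemma (the surjectivity half) $\HH_i(C) = 0$. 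For the inductive step I would write $K^R(\x) \cong \Otimes{K^R(x_1,\ldots,x_{n-1})}{K^R(x_n)}$ and apply the single-variable case to the complex $C' = \Otimes{K^R(x_1,\ldots,x_{n-1})}C$, after checking that $C'$ again has finitely generated homology (which holds because its homology is computed by a Koszul complex over the finitely generated homology of $C$, or more directly because tensoring a complex of finitely generated homology with a bounded complex of free modules preserves finite generation over a noetherian ring).

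The only delicate point is making sure Nakayama applies at each stage: it requires the homology modules in question to be finitely generated, which is exactly the hypothesis we have propagated through the mapping cone and through each Koszul tensor factor. Once that bookkeeping is in place, the argument is a routine induction, so I would present the $n=1$ case carefully and indicate the inductive reduction briefly.
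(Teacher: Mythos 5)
Your proposal is correct and follows essentially the same route as the paper's proof: reduce to exactness of the mapping cone $\cone(f)$, induct on $n$ down to a single element $x$ in the Jacobson radical, and use the long exact sequence associated to $K^R(x)\otimes\cone(f)$ together with Nakayama's Lemma on the finitely generated homology modules. Your additional bookkeeping (finite generation of $\HH_i(\cone(f))$ and of the homology of the intermediate Koszul tensor products, via long exact sequences and noetherianness) is exactly what the paper's terser proof uses implicitly.
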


\begin{proof}
The forward implication follows from the fact that $K$ is a bounded (below) complex of flat $R$-modules. 
For the converse, assume that $\Otimes Kf$ is a quasiisomorphism. 
It follows that the mapping cone $\cone(\Otimes Kf)\cong\Otimes K{\cone(f)}$ is exact,
and it suffices to show that $\cone(f)$ is exact. This is done by induction on $n$, the length of the sequence $\x$;
this reduces immediately to the case $n=1$.
Part of the long exact sequence for $\cone(f)$ and $\Otimes {K^R(x_1)}{\cone(f)}$
has the following form.
$$\HH_i(\cone(f))\xra{x_1}\HH_i(\cone(f))\to\underbrace{\HH_i(\Otimes {K^R(x_1)}{\cone(f)})}_{=0}$$
Since the homology module $\HH_i(\cone(f))$ is finitely generated,  Nakayama's Lemma implies that $\HH_i(\cone(f))=0$.
\end{proof}

\begin{Proof}[Proof of Theorem~\ref{thm130216a}]
\label{proof130217a}
The implications \eqref{thm130216a2}$\implies$\eqref{thm130216a1}, \eqref{thm130216a3}$\implies$\eqref{thm130216a4},
and \eqref{thm130216a6}$\implies$\eqref{thm130216a3} are routine. 
The implications \eqref{thm130216a1}$\implies$\eqref{thm130216a7}$\implies$\eqref{thm130216a2}
and \eqref{thm130216a7}$\implies$\eqref{thm130216a3} follow from Fact~\ref{fact130216a},
since 
$\supp_R(\Ext iMN)$ and $\supp_R(\Tor iMN)$
are contained in $\supp_R(M)\cap \supp_R(N)
=\supp_R(\Otimes MN)$.

\eqref{thm130216a4}$\implies$\eqref{thm130216a7}.
Assume that $\Ext iMN$ has an $S$-module structure compatible with its $R$-module structure via $\vf$ for $i=0,\ldots,\dim_R(N)-1$.
Fix a prime $\p\in\supp_R(M)\cap\supp_R(N)$,
and suppose that the induced map $R/\p\to S/\p S$ is not bijective.
Applying Fact~\ref{fact130216a} to the modules $\Ext iMN$ for $i<\dim_R(N)$, we see that
$\p\notin\supp_R(\Ext iMN)$ for all  $i<\dim_R(N)$.
It follows that
$$\text{$0=\Ext iMN_{\p}\cong\Ext[R_{\p}]i{M_{\p}}{N_{\p}}$ \quad for all $i=0,\ldots,\dim_R(N)-1$.}$$
However, since $M_{\p}\neq 0\neq N_{\p}$, Lemma~\ref{lem130216a} implies that there is 
an integer $i_0\leq\depth_{R_{\p}}(N_{\p})\leq\dim_{R_{\p}}(N_{\p})\leq\dim_R(N)$ such that $\Ext[R_{\p}] {i_0}{M_{\p}}{N_{\p}}\neq 0$.
The displayed vanishing implies that 
$$\dim_R(N)\leq i_0\leq\dim_{R_{\p}}(N_{\p})\leq\dim_R(N)$$
so we have $\dim_{R_{\p}}(N_{\p})\leq\dim_R(N)$. It follows that $\p=\m$, so the induced map $R/\p\to S/\p S$ is an isomorphism
by assumption, contradicting our supposition on~$\p$.

\eqref{thm130216a5}$\implies$\eqref{thm130216a6}.
Assume that $\Ext i{\Otimes SM}N$ is finitely generated over $R$ for all $i\geq 1$.
Remark~\ref{rmk130217a} implies $\Ext i{\Otimes SM}N$ is finitely generated over $R$ for all $i\geq 0$.
Let $J$ be an $R$-injective resolution of $N$, and let $P$ be an $R$-projective resolution of $M$.
To show that the natural map $\Ext i{\Otimes SM}N\to\Ext iMN$ is bijective for all $i\geq 0$,
it suffices to show that the natural chain map
$$f\colon\Hom{\Otimes SP}{J}\to\Hom PJ$$ 
is a quasiisomorphism.

Let $K=K^R(y_1,\ldots,y_e)$ denote the Koszul complex on a minimal generating sequence for $\m$.
Our assumption implies that the complexes $\Hom{\Otimes SP}{J}$ and $\Hom PJ$ have finitely generated homology over $R$.
Thus, Lemma~\ref{lem130217a} shows that it suffices to show that the following induced chain map
is a quasiisomorphism.
$$\Otimes Kf\colon\Otimes K\Hom{\Otimes SP}{J}\to\Otimes K\Hom PJ$$ 
The chain map $\Otimes Kf$ is compatible with the following (quasi)isomorphisms.
\begin{align*}
\Otimes K\Hom{\Otimes SP}{J}
&\cong\Otimes K\Hom{S}{\Hom PJ} \\
&\cong\Hom{\shift^{-e}\Otimes KS}{\Hom PJ} \\
&\simeq\Hom{\shift^{-e}K}{\Hom PJ} \\
&\cong\Otimes{K}{\Hom PJ}
\end{align*}
The first step in this sequence is adjointness.
The second and fourth steps follow from the fact that $K$ is a self-dual bounded complex of finite-rank free $R$-modules.
For the third step, the assumptions on $\vf$ imply that the chain map $K\to\Otimes KS$ is a quasiisomorphism (see~\cite[2.3]{frankild:amsveem});
since $\Hom PJ$ is a bounded-above complex of injective $R$-modules, the induced chain map
$$\Hom{\shift^{-e}\Otimes KS}{\Hom PJ} 
\xra\simeq\Hom{\shift^{-e}K}{\Hom PJ}$$
is also a quasiisomorphism.

\eqref{thm130216a3}$\implies$\eqref{thm130216a5}.
Assume that $\Ext iMN$ has an $S$-module structure compatible with its $R$-module structure via $\vf$  for all $i\geq 0$.
To show that
$\Ext i{\Otimes SM}N$ is finitely generated over $R$ for all $i\geq 1$,
we show that $\Ext i{\Otimes SM}N\cong \Ext iMN$.
To this end, we use the spectral sequence
$$\Ext pS{\Ext qMN}\implies\Ext{p+q}{\Otimes SM}N.$$
If you like, this spectral sequence comes from the $R$-flatness of $S$ and the adjointness isomorphism 
$\Rhom S{\Rhom MN}\simeq\Rhom{\Lotimes SM}{N}$ in $\catd(R)$.
As each $\Ext qMN$ has a compatible $S$-module structure,
Fact~\ref{fact130216a} implies that $\Hom S{\Ext qMN}\cong\Ext qMN$
and that 
$\Ext pS{\Ext qMN}=0$ for all $p\geq 1$.
Hence, the spectral sequence degenerates, implying that
$$\Ext{q}{\Otimes SM}N\cong \Ext 0S{\Ext qMN}\cong\Ext qMN$$
as desired. \qed
\end{Proof}

The following example shows that the range $i=0,\ldots,\dim_R(N)-1$ is optimal in condition~\eqref{thm130216a4} of
Theorem~\ref{thm130216a}.

\begin{ex}\label{ex130217b}
Let $k$ be a field, and consider the localized polynomial ring $R=k[X_1,\ldots,X_d]_{(X_1,\ldots,X_d)}$ with $d\geq 1$.
Choose $j$  such that $0\leq j<d$, and set $M= R/(X_1,\ldots,X_{j})$ and $N=R/(X_{j+1},\ldots,X_{d-1})$.
(If $j=d-1$, this means that $N=R$.)
Note that $N$ does not have a compatible $\comp R$-module structure by Fact~\ref{fact130216a}.

The sequence $X_1,\ldots,X_{j}$ is 
$R$-regular, so the Koszul complex $K^R(X_1,\ldots,X_{j})$ is an $R$-free resolution of $M$.
The fact that $X_1,\ldots,X_{j}$ is 
$N$-regular implies that
$$\Ext iMN\cong
\begin{cases}0
&\text{for all $i\neq j=\dim(N)-1$}\\
N&\text{for all $i= j=\dim(N)-1$.}\end{cases}$$
In particular, the module $\Ext iMN$ has a compatible $\comp R$-module structure for $i=0,\ldots,\dim_R(N)-2$,
but $\Ext{\dim_R(N)-1}MN$ does not have a compatible $\comp R$-module structure.
\end{ex}

The final result of this paper uses the following definition.

\begin{defn}\label{defn130217a}
Let $R$ be a ring with Jacobson radical $J$.
Let
$\nak(R)$
denote the class of all $R$-modules $L$ such that either $L=0$ or $L/JL\neq 0$.
\end{defn}

\begin{rmk}\label{rmk130217b}
Let $R$ be a ring with Jacobson radical $J$.
Nakayama's Lemma implies that every finitely generated $R$-module is in $\nak(R)$.
When $R$ is local, the terminology ``$L$ satisfies NAK'' from~\cite{anderson:neams}
is equivalent to $L\in\nak(R)$.
\end{rmk}

\begin{rmk}\label{rmk130217c}
In~\cite{anderson:neams}, it is shown that the conditions of Fact~\ref{fact130216a} are equivalent to the following condition:
\begin{enumerate}[(ix)]
\item
$\Ext iSN$ is in $\nak(S)$ (equivalently, in $\nak(R)$) for  $i= 1,\ldots,\dim_R(N)$.
\end{enumerate}
The next result gives a version of this in our situation. (Note that the numbering is chosen to compare with the
conditions in Theorem~\ref{thm130216a}.)
\end{rmk}

\begin{prop}\label{prop130217a}
Let $\vf\colon (R,\m,k)\to (S,\n,l)$ be a flat local ring homomorphism such that the induced map $R/\m\to S/\m S$ is an isomorphism,
and let $M$ and $N$ be finitely generated $R$-modules.
Consider the following conditions:
\begin{enumerate}[\rm(i)]
\item[\rm(v)] \label{prop130217a1}
$\Ext i{\Otimes SM}N$ is finitely generated over $R$ for all $i\geq 1$.
\item[\rm(v')] \label{prop130217a1}
$\Ext i{\Otimes SM}N=0$ for all $i\geq 1$.
\item[\rm(viii)] \label{prop130217a2}
$\Ext i{\Otimes SM}N$ is in $\nak(R)$ (i.e., in $\nak(S)$) for $i= 1,\ldots,\dim_R(N)$.
\end{enumerate}
The implications \emph{(v')}$\implies$\emph{(v)}$\implies$\emph{(viii)} always hold,
and the three conditions are equivalent when $\Ext iMN=0$ for all $i\geq 1$.
\end{prop}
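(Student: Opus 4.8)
The plan is to dispatch the two unconditional implications immediately and then to close the cycle under the extra hypothesis by reducing everything to Fact~\ref{fact130216a} for the module $\Hom MN$. For (v')$\implies$(v), note that the zero module is finitely generated, so the vanishing of each $\Ext i{\Otimes SM}N$ with $i\geq 1$ gives finite generation at once. For (v)$\implies$(viii), recall from Remark~\ref{rmk130217b} that every finitely generated $R$-module lies in $\nak(R)$; since (viii) only requires membership in $\nak(R)$ over the range $i=1,\ldots,\dim_R(N)$, which sits inside the range $i\geq 1$ of (v), there is nothing further to prove.

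Now assume $\Ext iMN=0$ for all $i\geq 1$; to obtain the equivalence it suffices to prove (viii)$\implies$(v'), closing the loop. First I would feed this vanishing into the spectral sequence
$$\Ext pS{\Ext qMN}\implies\Ext{p+q}{\Otimes SM}N$$
from Proof~\ref{proof130217a}. The hypothesis kills every row except $q=0$, so the sequence degenerates and the edge maps furnish isomorphisms $\Ext i{\Otimes SM}N\cong\Ext iS{\Hom MN}$ for all $i$. Since $R$ is noetherian and $M,N$ are finitely generated, $\Hom MN$ is a finitely generated $R$-module, so Fact~\ref{fact130216a} applies to it. Under the isomorphisms just obtained, condition (v') is precisely condition~\eqref{fact130216a4} of Fact~\ref{fact130216a} for $\Hom MN$, namely $\Ext iS{\Hom MN}=0$ for all $i\geq 1$.

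To derive this from (viii), I would invoke Remark~\ref{rmk130217c}, which says that the conditions of Fact~\ref{fact130216a} for $\Hom MN$ are equivalent to the requirement that $\Ext iS{\Hom MN}\in\nak(R)$ for $i=1,\ldots,\dim_R(\Hom MN)$. The one delicate point is a mismatch of ranges: (viii) provides the $\nak$-condition on $i=1,\ldots,\dim_R(N)$, while Remark~\ref{rmk130217c} asks for it only on $i=1,\ldots,\dim_R(\Hom MN)$. The containment $\supp_R(\Hom MN)\subseteq\supp_R(N)$ gives $\dim_R(\Hom MN)\leq\dim_R(N)$, so the range required by Remark~\ref{rmk130217c} is the smaller one and is covered by (viii). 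Thus Fact~\ref{fact130216a} yields $\Ext iS{\Hom MN}=0$ for all $i\geq 1$, which transports back through the degenerate spectral sequence to (v'). I expect the main obstacle to be exactly this range bookkeeping, together with verifying that the spectral sequence degenerates in the present setting, where the vanishing occurs in the $q$-direction rather than the $p$-direction used in Proof~\ref{proof130217a}; once the edge isomorphism $\Ext i{\Otimes SM}N\cong\Ext iS{\Hom MN}$ is in hand, Fact~\ref{fact130216a} does the rest.
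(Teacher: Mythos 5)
Correct, and essentially the paper's own argument: the paper derives the isomorphism $\Ext i{\Otimes SM}N\cong\Ext iS{\Hom MN}$ from the adjunction $\Rhom{\Lotimes SM}{N}\simeq\Rhom{S}{\Rhom MN}$ (which it notes is precisely the source of your spectral sequence, here degenerating along the row $q=0$) and then, like you, feeds condition (viii) into Remark~\ref{rmk130217c} applied to the finitely generated module $\Hom MN$ to conclude $\Ext iS{\Hom MN}=0$ for all $i\geq 1$. Your explicit range bookkeeping via $\supp_R(\Hom MN)\subseteq\supp_R(N)$, hence $\dim_R(\Hom MN)\leq\dim_R(N)$, makes precise a step the paper leaves implicit; the only detail you omit is the paper's opening observation that $\n=\m S$ gives $L\in\nak(R)\Leftrightarrow L\in\nak(S)$ for any $S$-module $L$, which underwrites the parenthetical ``(i.e., in $\nak(S)$)'' in (viii).
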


\begin{proof}
Note that, for any $S$-module $L$, one has $L/\m L=L/\n L$ since $\n=\m S$. Thus,
one has $L\in\nak(R)$ if and only if $L\in\nak(S)$.
Also, the implication (v')$\implies$(v) is trivial.

(v)$\implies$(viii).
Assume that $\Ext i{\Otimes SM}N$ is finitely generated over $R$. Since $\Ext i{\Otimes SM}N$ has a compatible
$S$-module structure, the previous paragraph and Remark~\ref{rmk130217b} imply that $\Ext i{\Otimes SM}N$ is in $\nak(R)$ and $\nak(S)$.

(viii)$\implies$(v).
Assume that $\Ext iMN=0$ for all $i\geq 1$ and 
$\Ext i{\Otimes SM}N$ is in $\nak(R)$ for $i= 1,\ldots,\dim_R(N)$.
The vanishing of $\Ext iMN$ and the flatness of $S$ imply that we have the following isomorphisms in $\catd(R)$.
\begin{align*}
\Rhom {\Otimes SM}{N}
&\simeq\Rhom {\Lotimes SM}{N}\\
&\simeq\Rhom{S}{\Rhom MN}\\
&\simeq\Rhom{S}{\Hom MN}
\end{align*}
Taking homology, we conclude that
$$\Ext iS{\Hom MN}\cong\Ext i{\Otimes SM}N \in \nak(R)$$
for $i= 1,\ldots,\dim_R(N)$. By Remark~\ref{rmk130217c}, we conclude that for $i\geq 1$ we have
$\Ext i{\Otimes SM}N\cong\Ext iS{\Hom MN}=0$,
as desired.
\end{proof}



\begin{thebibliography}{1}

\bibitem{anderson:neams}
B.~J. Anderson and S.~Sather-Wagstaff, \emph{{NAK} for {E}xt and ascent of
  module structures}, Proc. Amer. Math. Soc, to appear,
  \texttt{arXiv:1201.3039}.

\bibitem{christensen:gd}
L.~W. Christensen, \emph{Gorenstein dimensions}, Lecture Notes in Mathematics,
  vol. 1747, Springer-Verlag, Berlin, 2000. \MR{2002e:13032}

\bibitem{frankild:dcev}
A.~J. Frankild and S.\ Sather-Wagstaff, \emph{Detecting completeness from
  {E}xt-vanishing}, Proc. Amer. Math. Soc. \textbf{136} (2008), no.~7,
  2303--2312. \MR{2390496 (2009c:13016)}

\bibitem{frankild:amsveem}
A.~J. Frankild, S.\ Sather-Wagstaff, and R.~A. Wiegand, \emph{Ascent of module
  structures, vanishing of {E}xt, and extended modules}, Michigan Math. J.
  \textbf{57} (2008), 321--337, Special volume in honor of Melvin Hochster.
  \MR{2492456}

\bibitem{hartshorne:rad}
R.\ Hartshorne, \emph{Residues and duality}, Lecture Notes in Mathematics, No.
  20, Springer-Verlag, Berlin, 1966. \MR{36 \#5145}

\bibitem{verdier:cd}
J.-L.\ Verdier, \emph{Cat\'{e}gories d\'{e}riv\'{e}es}, SGA 4$\frac{1}{2}$,
  Springer-Verlag, Berlin, 1977, Lecture Notes in Mathematics, Vol. 569,
  pp.~262--311. \MR{57 \#3132}

\bibitem{verdier:1}
\bysame, \emph{Des cat\'egories d\'eriv\'ees des cat\'egories ab\'eliennes},
  Ast\'erisque (1996), no.~239, xii+253 pp. (1997), With a preface by Luc
  Illusie, Edited and with a note by Georges Maltsiniotis. \MR{98c:18007}

\end{thebibliography}
\providecommand{\bysame}{\leavevmode\hbox to3em{\hrulefill}\thinspace}
\providecommand{\MR}{\relax\ifhmode\unskip\space\fi MR }
\providecommand{\MRhref}[2]{%
  \href{http://www.ams.org/mathscinet-getitem?mr=#1}{#2}
}
\providecommand{\href}[2]{#2}

\end{document}